\newcommand{\ol}{\overline}
\newcommand{\pa}{\partial}
\newcommand{\de}{\delta}
\newcommand{\ka}{\kappa}
\newcommand{\ve}{\varepsilon}
\newcommand{\Om}{\Omega}
\newcommand{\cd}{\cdot}
\renewcommand{\th}{\theta}
\newcommand{\R}{{\mathbb R}}
\newcommand{\Z}{{\mathbb Z}}
\newcommand{\N}{{\mathbb N}}
\newcommand{\T}{{\mathbb T}}
\newcommand{\tE}{\widetilde{E}}
\renewcommand{\(}{\left(}
\renewcommand{\)}{\right)}
\newtheorem{theorem}{\bf Theorem}[section]
\newtheorem{lemma}[theorem]{\bf Lemma}
\newtheorem{proposition}[theorem]{\bf Proposition}
\theoremstyle{remark}
  \newtheorem{remark}[theorem]{\sc Remark}
\theoremstyle{definition}
\numberwithin{equation}{section}
\begin{document}

\title{Global solvability for semi-discrete Kirchhoff equation}
\author{
Fumihiko Hirosawa\footnote{Department of Mathematical Sciences, Faculty of Science, Yamaguchi University, Japan; 
e-mail: hirosawa@yamaguchi-u.ac.jp}
}
\date{}
\maketitle

\begin{abstract}
In this paper, we consider the global solvability and energy conservation for initial value problem of nonlinear semi-discrete wave equation of Kirchhoff type, which is a discretized model of Kirchhoff equation. 
\end{abstract}
%%%%%%%%%%%%%%%%%%%%%%%%%%%%%%%%%%%%%%
%
%
%
\section{Introduction}
%
%
%
%%%%%%%%%%%%%%%%%%%%%%%%%%%%%%%%%%%%%%
Kirchhoff equation was proposed by G. Kirchhoff in \cite{K} as the following 
integro-differential equation describing the vibration of an elastic string of length $l$: 
\begin{equation}\label{KC-1d}
  \pa_t^2 u(t,x) 
 -\(\ve^2 + \frac{1}{2l}\int^{l}_{0}\left|\pa_{x} u(t,x)\right|^2\,dx\)
  \pa_{x}^2 u(t,x) =0,
\end{equation}
where $x \in [0,l]$ and $\ve$ is a positive constant determined by the material of the string. 
The primitive equation \eqref{KC-1d} is generalized to the following wave equation with non-local nonlinearity: 
\begin{equation}\label{KC}
\pa_t^2 u(t,x) 
 -\Phi\(\sum_{j=1}^d \int_{\Om}\left| \pa_{x_j} u(t,x)\right|^2\,dx \)
  \sum_{j=1}^d \pa_{x_j}^2 u(t,x) =0,
\end{equation}
which is also called Kirchhoff equation, 
where $\Om$ is an open subset of $\R^d$, $x \in \Om$ and $\Phi$ satisfies 
\begin{equation}\label{Phi}
  \Phi \in C^1([0,\infty))
  \;\text{ and }\;
  \inf_{\eta \ge 0}\{\Phi(\eta)\} >0. 
\end{equation}
As with any other kind of wave equations, \eqref{KC} is studied with initial data 
\begin{equation}\label{KCIC}
u(0,x) = u_0(x),\;\; \pa_t u(0,x) = u_1(x),\;\; x\in \Om
\end{equation}
and some suitable boundary condition on $\pa\Om$ as an initial boundary value problem, or with \eqref{KCIC} for $\Om = \R^d$ as an initial value problem. 
Moreover, \eqref{KC} is naturally generalized as an abstract evolution equation in a real Hilbert space. 

Although Kirchhoff equation has been deeply investigated in many previous literature and an overview of the research and results refer to \cite{GG09} and \cite{S92}, 
here we only introduce a summary of the global solvability of \eqref{KC} and \eqref{KCIC}, which is the most fundamental and important problem. 
Briefly, the existence of a global solution is proved only one of the following cases: 
\begin{itemize}
\item[(i)] 
Analytic (or quasi-analytic) initial data 
(\cite{AS84, B40, N84, P75})

\item[(ii)] 
Small initial data (\cite{DS93, GH80})

\item[(iii)] 
Spectral-gap initial data (\cite{GG11, H06, M02})

\item[(iv)] 
Special nonlinearity $\Phi$ (\cite{P85})
\end{itemize}
Roughly speaking, the factors that made it possible to prove global solvability under these conditions are as follows: 
\begin{itemize}
\item 
The local solution could be extended to infinity by imposing a strong constraint on the high-frequency energy of the initial data: (i) and (iii). 
\item 
The non-linear problem could be reduced to the perturbation of the linear wave equation with constant coefficient: (ii). 
\item 
A special nonlinearity $\Phi$ (but different from $\Phi(\eta)=\ve^2+(2l)^{-1} \eta$) with the ``second order energy conservation'', which is crucial for the proof of global solvability: (iv). 
\end{itemize}
The necessity of these assumptions for global solvability should be a problem to be considered, but the other big problem for Kirchhoff equation is the non-existence of global solution, and it is completely unsolved. 
As mentioned above, many of the Kirchhoff equation remain unsolved, but the purpose of this paper is to prove the global solvability for semi-discrete Kirchhoff equation, which is the discretization of the initial value problem of \eqref{KC} with respect to spatial variable. 

%%%%%%%%%%%%%%%%%%%%%%%%%%%%%%%%%%%%%%
%
%
%
\section{Discretization and main theorem}
%
%
%
%%%%%%%%%%%%%%%%%%%%%%%%%%%%%%%%%%%%%%

Discretization of the partial differential equation \eqref{KC} can be considered for time and spatial variables, but here we consider discretization only for spatial0 variables; we shall call it semi-discretization. 

Let $l(\Z^d)$ be the set of all complex-valued infinite $d$ dimensional arrays. 
We define $l^2$ by 
\begin{equation*}
  l^2:=\left\{
  f = \{f[n]\}_{n \in \Z^d} \in l(\Z^d)\;;\;
  \|f\|:= \sqrt{\sum_{n\in\Z^d}|f[n]|^2}<\infty
  \right\}.
\end{equation*}
We note that $l^2$ is a Hilbert space with the inner product 
\[
  (f,g):=\sum_{n\in\Z^d}f[n] \ol{g[n]}
  \quad (f,g \in l^2).
\]

 For $f \in l(\Z^d)$ we define the forward and the backward difference operators $D_j^+$ and $D^-_j$ for $j=1,\ldots,d$ by 
\[
  D_j^+ f[n]:=f[n+e_j]-f[n]
  \;\text{ and }\;
  D_j^- f[n]:=f[n]-f[n-e_j]
  \quad
  (n\in \Z^d),
\]
where $\{e_1,\ldots,e_d\}$ is the standard basis of $\R^d$. 
Denoting $\nabla^{\pm} = (D_1^\pm,\ldots,D_d^\pm)$, the discrete Laplace operator $\Delta$ on $\Z^d$ is given by 
\[
  \Delta :=
  \nabla^+ \cd \nabla^- = D_1^+ D_1^- + \cdots + D_d^+ D_d^-, 
\]
that is, 
\[
  \Delta f[n]
 =\sum_{j=1}^d \(f[n+e_j] - 2f[n] + f[n-e_j]\)
  \quad
  (n\in \Z^d).
\]
%Then we have the following lemma. 
%%%%%%%%%%%%%%%%%%%%%%%%%%%%%%
\begin{lemma}\label{lemma-l2H1}
If $f,g \in l^2$, then 
$\|D_j^{\pm} f\| \le 2\|f\|$ 
and $(D_j^\pm f, g)=-( f, D_j^\mp g)$ 
for $j=1,\ldots,d$. 
\end{lemma}
%%%%%%%%%%%%%%%%%%%%%%%%%%%%%%
\begin{proof}
Let $f,g \in l^2$. 
Then we have 
\begin{align*}
  \left\|D_j^{\pm} f\right\|^2
& =\sum_{n\in\Z^d} |f[n \pm e_j]-f[n]|^2
\\
&\le 2\(\sum_{n\in\N} |f[n \pm e_j]|^2 + \sum_{n\in\N}|f[n]|^2\)
 =4\|f\|^2. 
\end{align*}
Noting that $D_j^\pm f, D_j^\pm g \in l^2$, we have 
\begin{align*}
  \(D_j^\pm f, g\)
%&=\pm \sum_{n\in\Z^d} \(f[n \pm e_j]-f[n]\) \ol{g[n]}
%\\
&=\pm\(\sum_{n\in\Z^d} f[n \pm e_j] \ \ol{g[n]}
   -\sum_{n\in\Z^d} f[n] \ \ol{g[n]}\)
\\
&=\pm\(\sum_{n\in\Z^d} f[n] \ \ol{g[n \mp e_j]}
   -\sum_{n\in\Z^d} f[n] \ \ol{g[n]}\)
\\
&=-\sum_{n\in\Z^d} f[n] \ \ol{D_j^\mp g[n]}
 =-\( f, D_j^\mp g\). 
\end{align*}
\end{proof}

Let us consider the following infinite system of initial value problem of second order ordinary differential equations: 
%%%%%%%%%%%%%%%%%%%%%%%%%%%%%%%%%%%%%%
\begin{equation}\label{K}
\begin{cases}
  u''(t)[n] 
 -\Phi\(\left\|\nabla^+ u(t)\right\|^2\)
  \Delta u(t)[n] =0,
&(t,n) \in (0,\infty) \times \Z^d,
\\
  u(0)[n] = u_0[n],\quad
  u'(0)[n] = u_1[n],
& n \in \Z^d
\end{cases}  
\end{equation}
%%%%%%%%%%%%%%%%%%%%%%%%%%%%%%%%%%%%%%
in which the domain $\Om = \R^d$ of \eqref{KC} and \eqref{KCIC} is discretized into the the lattice $\Z^d$, where 
$u'(t) = \frac{d}{dt}u(t) = \{\frac{d}{dt}u(t)[n]\}_{n\in\Z^d}$ 
and 
$\|\nabla^+ u(t)\|^2 = \sum_{j=1}^d \|D_j^+ u(t)\|^2$. 
We shall call the equation of \eqref{K} 
semi-discrete Kirchhoff equation. 

For the solution $u=u(t)$ of \eqref{K}, we define the energy functional $E(t;u)$ by 
\begin{equation*}
  E(t;u):=\frac12\( 
  \int^{\|\nabla^+ u(t)\|^2}_0 \Phi(\eta)\,d\eta
 +\left\|u'(t) \right\|^2
  \).
\end{equation*}
The following is the main theorem of this paper.
%%%%%%%%%%%%%%%%%%%%%%%%%%%%%%%%%%%%%%%%%
\begin{theorem}\label{Mth}
Let $\Phi$ satisfy \eqref{Phi}. 
If $u_0, u_1 \in l^2$, 
then the unique global solution $u(t) \in C^2([0,\infty);l^2)$ 
of \eqref{K} exists. 
Moreover, the following energy conservation is established: 
\begin{equation}\label{E-conserv}
  E(t;u) \equiv E(0;u) \quad (t>0).
\end{equation}
\end{theorem}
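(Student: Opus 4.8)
The key observation, which completely changes the character of the problem relative to the continuous equation \eqref{KC}, is that the discrete Laplacian $\Delta$ is a \emph{bounded} operator on $l^2$: by Lemma \ref{lemma-l2H1}, $\|\Delta f\| \le \sum_{j=1}^d \|D_j^+ D_j^- f\| \le 4d\,\|f\|$. Because there is no loss of derivatives, the plan is to treat \eqref{K} as an ordinary differential equation in a Banach space with a locally Lipschitz right-hand side, obtain a unique maximal local solution from the Cauchy--Lipschitz (Picard--Lindel\"of) theorem, prove energy conservation by differentiating $E(t;u)$ and using summation by parts, and finally upgrade local to global existence through the a priori bounds that conservation provides.

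First I would recast \eqref{K} as a first-order system on $X := l^2 \times l^2$, namely $\frac{d}{dt}(u,v) = G(u,v)$ with $G(u,v) := \bigl(v,\ \Phi(\|\nabla^+ u\|^2)\,\Delta u\bigr)$. The map $u \mapsto \|\nabla^+ u\|^2 = \sum_{j=1}^d \|D_j^+ u\|^2$ is a bounded quadratic form, hence locally Lipschitz and bounded on bounded sets; since $\Phi \in C^1$ by \eqref{Phi}, the scalar $u \mapsto \Phi(\|\nabla^+ u\|^2)$ is locally Lipschitz; and $u \mapsto \Delta u$ is bounded linear. Their product, and therefore $G$, is locally Lipschitz on $X$. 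The Banach-space Picard--Lindel\"of theorem then yields a unique maximal solution $(u,u') \in C^1([0,T_{\max});X)$; since $u'' = \Phi(\|\nabla^+ u\|^2)\Delta u$ is continuous in $t$, this gives $u \in C^2([0,T_{\max});l^2)$.

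Next I would establish \eqref{E-conserv} on $[0,T_{\max})$. Differentiating and using $\frac{d}{dt}\|\nabla^+ u\|^2 = 2\,\mathrm{Re}\sum_{j=1}^d (D_j^+ u',\,D_j^+ u)$ together with the adjoint identity of Lemma \ref{lemma-l2H1}, which gives $\sum_{j=1}^d (D_j^+ u',\,D_j^+ u) = -(u',\Delta u)$ (as $\sum_j D_j^+ D_j^- = \Delta$), I find $\frac{d}{dt}\|\nabla^+ u\|^2 = -2\,\mathrm{Re}(u',\Delta u)$. Meanwhile $\frac{d}{dt}\|u'\|^2 = 2\,\mathrm{Re}(u'',u') = 2\Phi(\|\nabla^+ u\|^2)\,\mathrm{Re}(u',\Delta u)$ by the equation, so the two contributions to $\frac{d}{dt}E(t;u)$ cancel and $E(t;u) \equiv E(0;u)$. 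From conservation and $\Phi \ge c_0 := \inf_{\eta\ge 0}\Phi(\eta) > 0$, I obtain $\|u'(t)\|^2 \le 2E(0;u)$ and $c_0\|\nabla^+ u(t)\|^2 \le 2E(0;u)$, both uniform on $[0,T_{\max})$. Since $u(t) = u_0 + \int_0^t u'(s)\,ds$, it follows that $\|u(t)\| \le \|u_0\| + t\sqrt{2E(0;u)}$ stays finite on every bounded interval. If $T_{\max} < \infty$, then $(u(t),u'(t))$ would remain in a bounded subset of $X$ as $t \to T_{\max}^-$, contradicting the blow-up alternative of the maximal solution; hence $T_{\max} = \infty$, and the solution is global.

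I do not expect a genuine obstacle here: the boundedness of $\Delta$ dissolves the central difficulty of the continuous Kirchhoff problem, and the argument is essentially standard ODE theory plus an energy estimate. The only mild subtlety is that energy conservation controls $\|u'\|$ and $\|\nabla^+ u\|$ but not $\|u\|$ directly; that norm is recovered by integrating $u'$, which yields at most linear-in-$t$ growth and is therefore harmless for continuation to all finite times.
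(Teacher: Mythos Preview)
Your argument is correct, but it takes a genuinely different route from the paper's. The energy conservation step is identical (differentiate $E(t;u)$, use the adjoint identity from Lemma~\ref{lemma-l2H1} to sum by parts, and cancel), and the global continuation mechanism is morally the same: energy conservation bounds the data uniformly, so the local existence time has a uniform lower bound. Where you differ is in local existence. You invoke the Banach-space Picard--Lindel\"of theorem directly on $X=l^2\times l^2$, using that $\Delta$ is bounded so the nonlinearity is locally Lipschitz; this is short and clean, and the blow-up alternative plus the linear-in-$t$ bound on $\|u\|$ closes the argument. The paper instead constructs the local solution by an explicit iteration of linear problems $w_\nu''=\Phi(\|\nabla^+ w_{\nu-1}\|^2)\Delta w_\nu$, proving uniform energy bounds on the iterates (Lemma~\ref{lemm-ls1}) and then contraction in a weighted energy $H(t;y_\nu,\phi_\nu)$ to get convergence; the resulting local time $M_1^{-1}$ is explicit in terms of $L_0$, $L_1$, $\Phi_0$, $\Phi_1$, and global existence follows by showing $M_1^{-1}$ is bounded below by a fixed $\delta_1$ thanks to \eqref{L0E}--\eqref{L1L0}. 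Your approach is more elementary and exploits directly the key structural point you identified (boundedness of $\Delta$); the paper's approach is more constructive, yields explicit quantitative bounds on the existence time, and parallels the standard quasilinear PDE machinery used for the continuous Kirchhoff equation, which may be advantageous for the asymptotic questions raised in the concluding remarks.
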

%%%%%%%%%%%%%%%%%%%%%%%%%%%%%%%%%%%%%%%%%

%%%%%%%%%%%%%%%%%%%%%%%%%%%%%%%%%%%%%%%%
\begin{remark}
If $d=1$, $\Om = [1,N]$ for $N \in \N$ and $N \ge 2$, then a corresponding semi-discrete problem of \eqref{KC} and \eqref{KCIC} with Dirichlet boundary condition $u(t,1)=u(t,N)=0$ is the following $N$ system of initial value problem of second order ordinary differential equations: 
\begin{equation*}
\begin{cases}
  u''(t)[n] 
 -\Phi\(\sum_{m=1}^{N} |D_1^+ u(t)[m]|^2\)
  \(u(t)[n+1]-2u(t)[n]+u(t)[n-1]\) =0, 
\\
  u(0)[n] = u_0[n],\quad
  u'(0)[n] = u_1[n],
%\\
%  u(t)[1]=u(t)[N]=0, \quad
%  u(t)[0]=u(t)[N-1], \quad 
%  u(t)[N+1]=u(t)[2]
\\
  u(t)[m]=u(t)[N+m-1]=0 \;\; (m=0,1,2)
\end{cases}  
\end{equation*}
for $t > 0$ and $n=1,\ldots,N$. 
Then the existence of a unique global solution $u(t)=\{u(t)[n]\}_{n=1}^N$ is trivial. 
\end{remark}
%%%%%%%%%%%%%%%%%%%%%%%%%%%%%%%%%%%%%%%%

%%%%%%%%%%%%%%%%%%%%%%%%%%%%%%%%%%%%%%%%
\begin{remark}
Time global energy estimates for linear semi-discrete wave equations are studied in \cite{H21}, but no results are known for global existence of a solution of semi-discrete problem \eqref{K} as far as the author knows. 
\end{remark}
%%%%%%%%%%%%%%%%%%%%%%%%%%%%%%%%%%%%%%%%

Without loss of generality, we assume that 
\[
  \inf_{\eta \ge 0}\{\Phi(\eta)\} = 1
\]
in following sections.

%%%%%%%%%%%%%%%%%%%%%%%%%%%%%%%%%%%%%%%%%%%%%%%%
%
%
%
\section{Linear semi-discrete wave equation}
%

%%
%%%%%%%%%%%%%%%%%%%%%%%%%%%%%%%%%%%%%%%%%%%%%%%%

Let $\phi \in C^1([0,\infty))$ satisfy $\inf_{t \ge 0}\{\phi(t)\} \ge 1$. 
For $w(t) \in C^2([0,\infty);l^2)$, 
we define $\tE_0(t;w,\phi)$ and $\tE_1(t;w,\phi)$ by 
\begin{equation*}
  \tE_0(t;w,\phi):=\frac{1}{2}\(
  \phi(t)\left\| \nabla^+ w(t)\right\|^2 + \left\|w'(t) \right\|^2
  \) 
\end{equation*}
and
\begin{equation*}
  \tE_1(t;w,\phi):=\frac{1}{2}\(
  \phi(t)\left\|\Delta w(t)\right\|^2 + \left\|\nabla^+ w'(t) \right\|^2
  \). 
\end{equation*}
We consider the following initial value problem of linear semi-discrete wave equation with time dependent coefficient: 
\begin{equation}\label{L}
\begin{cases}
  w''(t)[n]-\phi(t)\Delta w(t)[n]=0,
  & (t,n) \in (0,\infty)\times \Z^d,
\\
  w(0)[n] = u_0[n],\quad
  w'(0)[n] = u_1[n],
& n \in \Z^d. 
\end{cases} 
\end{equation}

%%%%%%%%%%%%%%%%%%%%%%%%%%%%
\begin{proposition}\label{prop-L}
If $u_0,u_1 \in l^2$, then the unique solution 
$w(t) \in C^2([0,\infty);l^2)$ of \eqref{L} exists. 
Moreover, the following estimate is established$:$ 
\begin{equation}\label{eest-L}
  \tE_m(t;w,\phi) \le  \exp\(\Psi_1(t;\phi)t\) \tE_m(0;w,\phi) 
  \quad
  (m=0,1)
\end{equation}
for any $t>0$, where
\begin{equation*}
  \Psi_1(t;\phi):=\max_{0\le s \le t}\{|\phi'(s)|\}. 
\end{equation*}
\end{proposition}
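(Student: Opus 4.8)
The plan is first to settle existence and uniqueness by treating \eqref{L} as a linear ODE in $l^2$, and then to prove the estimate \eqref{eest-L} by a differential inequality driven by the summation-by-parts formula of Lemma \ref{lemma-l2H1}.

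For existence and uniqueness, note that by Lemma \ref{lemma-l2H1} the operator $\Delta = \nabla^+\cd\nabla^-$ is bounded on $l^2$, with $\|\Delta f\|\le 4d\|f\|$. Rewriting \eqref{L} as a first-order system for the pair $(w,w')$ with generator $\begin{pmatrix} 0 & I\\ \phi(t)\Delta & 0\end{pmatrix}$, a bounded operator on $l^2\times l^2$ depending continuously on $t$, the standard theory of linear differential equations in a Banach space (successive approximations together with a Gronwall bound) yields a unique solution $w\in C^2([0,\infty);l^2)$; globality is automatic since the coefficient is bounded on every compact time interval.

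For the estimate I would differentiate $\tE_m$ in $t$, which is justified because $w\in C^2$ and the difference operators are bounded, so they commute with $\frac{d}{dt}$. For $m=0$ this gives
\[
  \frac{d}{dt}\tE_0 = \frac12\phi'(t)\left\|\nabla^+ w\right\|^2 + \phi(t)\sum_{j=1}^d \mathrm{Re}\(D_j^+ w', D_j^+ w\) + \mathrm{Re}\(w'', w'\).
\]
The identity $(D_j^\pm f, g) = -(f, D_j^\mp g)$ from Lemma \ref{lemma-l2H1}, combined with $\sum_j D_j^- D_j^+ = \Delta$, rewrites the middle sum as $-\phi(t)\,\mathrm{Re}(w',\Delta w)$; substituting the equation $w'' = \phi(t)\Delta w$ turns the last term into $\phi(t)\,\mathrm{Re}(\Delta w, w') = \phi(t)\,\mathrm{Re}(w',\Delta w)$, so the two cancel and $\frac{d}{dt}\tE_0 = \frac12\phi'(t)\|\nabla^+ w\|^2$. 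The case $m=1$ is entirely parallel once one notes that $D_j^+$ commutes with $\Delta$: differentiating $\tE_1$ and applying summation by parts to $\sum_j (D_j^+ w'', D_j^+ w') = -(w'',\Delta w') = -\phi(t)(\Delta w, \Delta w')$ produces the same cancellation, leaving $\frac{d}{dt}\tE_1 = \frac12\phi'(t)\|\Delta w\|^2$.

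To conclude, since $\phi(t)\ge 1$ the definition of $\tE_m$ yields $\|\nabla^+ w\|^2 \le 2\tE_0$ and $\|\Delta w\|^2 \le 2\tE_1$, so in both cases $\frac{d}{dt}\tE_m \le |\phi'(t)|\,\tE_m \le \Psi_1(t;\phi)\,\tE_m$, using $|\phi'(s)|\le \Psi_1(t;\phi)$ for $0\le s\le t$. Gronwall's inequality on $[0,t]$ then gives $\tE_m(t;w,\phi)\le \exp(\Psi_1(t;\phi)t)\,\tE_m(0;w,\phi)$. I expect the only genuine subtlety to be the bookkeeping of the summation-by-parts cancellation, namely verifying that the cross terms arising from $\phi\frac{d}{dt}\|\cd\|^2$ and from the equation annihilate exactly; the regularity needed to differentiate termwise comes for free from $w\in C^2$ and the boundedness of the difference operators.
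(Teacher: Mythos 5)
Your proof is correct, but the existence part takes a genuinely different route from the paper. For the energy estimate \eqref{eest-L} your argument coincides with the paper's: differentiate $\tE_m$, use $(D_j^\pm f,g)=-(f,D_j^\mp g)$ and $\sum_j D_j^- D_j^+=\Delta$ to produce the exact cancellation, observe that $\phi\ge 1$ gives $\|\nabla^+ w\|^2\le 2\tE_0$ and $\|\Delta w\|^2\le 2\tE_1$, and apply Gronwall. For existence and uniqueness, however, the paper does not use the Banach-space ODE argument you give: it passes to the discrete Fourier transform, writing $v_k(\th)=\sum_n e^{-\i\th\cd n}u_k[n]$ on $\T^d$, solves the family of scalar ODEs $v''+\phi(t)|\xi(\th)|^2 v=0$ with $\xi(\th)=(2\sin\frac{\th_1}{2},\ldots,2\sin\frac{\th_d}{2})$, checks $v_0,v_1\in L^2(\T^d)$ via Parseval, and recovers $w(t)[n]$ by the inverse transform; uniqueness is then deduced from the energy estimate \eqref{eest-L} rather than from ODE theory. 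Your route instead exploits the boundedness $\|\Delta f\|\le 4d\|f\|$ (an immediate consequence of Lemma \ref{lemma-l2H1}) to view \eqref{L} as a first-order linear ODE with bounded, continuous-in-$t$ generator on $l^2\times l^2$, so Picard iteration gives global existence and uniqueness at once. This is shorter, self-contained, and isolates the structural feature that distinguishes the semi-discrete setting from the continuous one (where $\Delta$ is unbounded and no such argument is available); the paper's Fourier approach costs more bookkeeping but buys an explicit representation formula for the solution as a Fourier multiplier, which parallels the classical treatment of the continuous equation. Both arguments are complete and rigorous.
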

%%%%%%%%%%%%%%%%%%%%%%%%%%%%
\begin{proof}
For $\th = (\th_1,\ldots,\th_d) \in \T^d$ and $\T:=[-\pi,\pi]$, we define 
\[
  \xi(\th):=\(2\sin\frac{\th_1}{2},\ldots,2\sin\frac{\th_d}{2}\), 
\]
and consider the following initial value problme of second order ordinary differential equation with parameter $\th$: 
\begin{equation*}
\begin{cases}
  v''(t;\th) + \phi(t) |\xi(\th)|^2 v(t;\th) = 0, \quad t > 0,
\\
  v(0;\th) = v_0(\th),\quad 
  v'(0;\th) = v_1(\th),
\end{cases} 
\end{equation*}
that has a unique global solution. 
Setting the initial data by 
\begin{equation*}
  v_0(\th):=\sum_{n\in\Z^d} e^{-i \th \cd n} u_0[n]
  \;\;\text{ and }\;\;
  v_1(\th):=\sum_{n\in\Z^d} e^{-i \th \cd n} u_1[n], 
\end{equation*}
where $\th\cd n = \th_1 n_1 + \cdots + \th_d n_d$, 
by Parseval's identity, we have  
\begin{align*}
  \int_{\T^d} |v_k(\th)|^2\,d\th
&=\int_{\T^d} \(\sum_{n\in\Z^d} e^{-i \th \cd n} u_k[n]\) 
              \ol{\(\sum_{m\in\Z^d} e^{-i \th \cd m} u_k[m]\)} \,d\th
\\
&=\sum_{n\in\Z^d}\sum_{m\in\Z^d} 
  u_k[n] \ \ol{u_k[m]} \ 
  \prod_{j=1}^d \int_{\T}  e^{i \th_j (m_j-n_j)} \,d\th_j 
%&=(2\pi)^d \sum_{n\in\Z^d} |u_0[n]|^2 
\\
&=(2\pi)^d \|u_k\|^2
\end{align*}
for $k=0,1$, it follows that $v_0, v_1 \in L^2(\T^d)$. 
Moreover, setting $w(t)$ by 
\begin{equation*}
  w(t)[n]:=\frac{1}{(2\pi)^d} \int_{\T^d} e^{i n\cd \th} v(t;\th)\,d\th
\end{equation*}
for $n\in\Z^d$, we see that $w(0)=u_0$, $w'(0)=u_1$ and 
\begin{align*}
  w''(t)[n]
&=\frac{1}{(2\pi)^d} \int_{\T^d} e^{i n\cd \th} v''(t;\th)\,d\th
 =-\phi(t)\frac{1}{(2\pi)^d} \int_{\T^d} 
    e^{i n\cd \th} |\xi(\th)|^2 v(t;\th)\,d\th
\\
&=\phi(t)\frac{1}{(2\pi)^d}
  \int_{\T^d}  \sum_{j=1}^d
    \(e^{i(n+e_j)\cd\th}-2e^{in\cd\th} + e^{i(n-e_j)\cd\th}\) v(t;\th)\,d\th
\\
&=\phi(t)\sum_{j=1}^d\(w(t)[n+e_j] - 2w(t)[n] + w(t)[n-e_j]\)
\\
&=\phi(t)\Delta w(t)[n],
\end{align*}
it follows that $w(t)$ is the solution of \eqref{L}. 
%\begin{align*}
% e^{in\cd\th} \(2\sin\frac{\th_j}{2}\)^2
%=-e^{in\cd\th}\(e^{i\frac{\th_j}{2}}-e^{-i\frac{\th_j}{2}}\)^2
%=-\(e^{i(n+e_j)\cd\th}-2e^{in\cd\th} + e^{i(n-e_j)\cd\th}\)
%\end{align*}
Noting the equalities
$D_j^+ D_j^- = D_j^- D_j^+$ ($j=1,\ldots,d$), 
\begin{align*}
  \frac{d}{dt}\frac12\left\| \nabla^+ w(t)\right\|^2
%&=\sum_{j=1}^d \sum_{n\in \Z^d} \Re\(D_j^+ w'(t)[n] \ \ol{D_j^+ w(t)[n]}\)
&=\sum_{j=1}^d \Re\(D_j^+ w'(t), D_j^+ w(t)\)
\\
%&= -\sum_{j=1}^d \sum_{n\in \Z^d} \Re\(w'(t)[n] \ \ol{D_j^- D_j^+ w(t)[n]}\)
&= -\sum_{j=1}^d \Re\(w'(t), D_j^- D_j^+ w(t)\)
\\
&= -\Re\(w'(t), \Delta w(t)\)
\end{align*}
and
\begin{align*}
  \frac{d}{dt}\frac12\left\| \nabla^+ w'(t)\right\|^2
 = -\Re\(w''(t), \Delta w'(t)\), 
\end{align*}
we have 
\begin{align*}
  \frac{d}{dt}\tE_0(t;w,\phi)
%&=\frac12\phi'(t)\left\| \nabla^+ w(t)\right\|^2 
%  -\phi(t)\Re\(w'(t), \Delta w(t)\)
%  +\Re\(w''(t), w'(t)\)
%\\
&=\frac12\phi'(t)\left\| \nabla^+ w(t)\right\|^2
  \le |\phi'(t)| \tE_0(t;w,\phi) 
\end{align*}
and
\begin{align*}
  \frac{d}{dt}\tE_1(t;w,\phi)
&=\frac12\phi'(t)\left\|\Delta w(t)\right\|^2 
 +\phi(t)\Re\(\Delta w'(t), \Delta w(t)\)
 -\Re\(w''(t), \Delta w'(t)\)
\\
&=\frac12\phi'(t)\left\| \Delta w(t)\right\|^2
  \le |\phi'(t)| \tE_1(t;w,\phi). 
\end{align*}
It follows that 
\begin{align*}
  \tE_m(t;w,\phi) 
  \le \exp\(\int^t_0 |\phi'(s)|\,ds\) \tE_m(0;w,\phi) 
  \le \exp\(\Psi_1(t;\phi) t \) \tE_m(0;w,\phi)
\end{align*}
for $m=0,1$ by Gronwall's inequality. 
The uniqueness of the solution is immediately follow from \eqref{eest-L}. 
\end{proof}

%%%%%%%%%%%%%%%%%%%%%%%%%%%%%%%%%%%%%%%%%%%%%%%%
%
%
%
\section{Local solvability for semi-discrete Kirchhoff equation}
%
%
%
%%%%%%%%%%%%%%%%%%%%%%%%%%%%%%%%%%%%%%%%%%%%%%%%

Let us consider the following initial value problem of semi-discrete Kirchhoff equation with an initial time $T_0 \ge 0$: 
\begin{equation}\label{KT0}
\begin{cases}
  u''(t)[n] 
 -\Phi\(\left\|\nabla^+ u(t)\right\|^2\)
  \Delta u(t)[n] =0,
&(t,n) \in (T_0,\infty) \times \Z^d,
\\
  u(T_0)[n] = \tilde{u}_0[n],\quad
  u'(T_0)[n] = \tilde{u}_1[n],
& n \in \Z^d. 
\end{cases}  
\end{equation}

For $L>0$ and $\tilde{u}_0, \tilde{u}_1 \in l^2$, we define
\[
  \Phi_0(L):=\max_{0 \le \eta \le L}\{|\Phi(\eta)|\},
  \quad
  \Phi_1(L):=\max_{0 \le \eta \le L}\{|\Phi'(\eta)|\},
\]
\begin{equation*}
 L_0=L_0\(\tilde{u}_0,\tilde{u}_1\):=
  \Phi\(\|\nabla^+ \tilde{u}_0\|^2\) \left\| \nabla^+ \tilde{u}_0\right\|^2 
  + \| \tilde{u}_1\|^2
\end{equation*}
and
\begin{equation*}
 L_1=L_1\(\tilde{u}_0,\tilde{u}_1\):=
  \Phi\(\|\nabla^+ \tilde{u}_0\|^2\) \left\| \Delta \tilde{u}_0 \right\|^2 
  + \left\| \nabla^+ \tilde{u}_1 \right\|^2. 
\end{equation*}
%%%%%%%%%%%%%%%%%%%%%%%%%%%%%
\begin{proposition}\label{Prop_ls}
If $\tilde{u}_0, \tilde{u}_1 \in l^2$, then there exists a unique solution $u(t)$ of \eqref{KT0} on $[T_0,T_0 + M_1^{-1}]$ with 
\begin{equation*}
  M_1:=e(L_0+L_1) \Phi_1(e L_0)
\end{equation*}
such that $u(t) \in C^2([T_0,T_0 + M_1^{-1}];l^2)$. 
\end{proposition}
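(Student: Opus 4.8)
The plan is to construct the solution of \eqref{KT0} by freezing the Kirchhoff coefficient along a successive-approximation scheme built on the linear solver of Proposition \ref{prop-L}. For $v$ ranging over a suitable class I would set $\phi_v(t):=\Phi(\|\nabla^+ v(t)\|^2)$ and let $w=\Lambda(v)$ be the solution of the linear problem \eqref{L} (with initial time shifted to $T_0$ and data $\tilde u_0,\tilde u_1$) furnished by Proposition \ref{prop-L}; a fixed point of $\Lambda$ is exactly a solution of \eqref{KT0}. Since $\Phi\in C^1$ and $v\in C^1$ we have $\phi_v\in C^1$, and after the normalization $\inf\Phi=1$ we have $\phi_v\ge 1$, so Proposition \ref{prop-L} applies and already delivers $w\in C^2$. (Note that $\nabla^+$ and $\Delta$ are bounded on $l^2$ by Lemma \ref{lemma-l2H1}, so everything here stays in $l^2$; the sole role of Proposition \ref{prop-L} is to pin down the sharp energy growth that fixes the lifespan.)

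The invariant set would be
\begin{align*}
X:=\big\{\, v\in C^2([T_0,T_0+M_1^{-1}];l^2)\ :\ & v(T_0)=\tilde u_0,\ v'(T_0)=\tilde u_1, \\
& \|\nabla^+ v\|^2\le eL_0,\ \|v'\|^2\le eL_0,\ \|\Delta v\|^2\le eL_1,\ \|\nabla^+ v'\|^2\le eL_1 \,\big\},
\end{align*}
the last four bounds being required for all $t\in[T_0,T_0+M_1^{-1}]$ (note $X\neq\emptyset$, e.g. the solution of \eqref{L} with constant coefficient $\Phi(\|\nabla^+\tilde u_0\|^2)$ lies in $X$). To show $\Lambda(X)\subseteq X$ I would first bound the frozen coefficient: for $v\in X$ the bound $\|\nabla^+ v(t)\|^2\le eL_0$ gives $|\Phi'(\|\nabla^+ v(t)\|^2)|\le\Phi_1(eL_0)$, while $\frac{d}{dt}\|\nabla^+ v\|^2=-2\Re(v',\Delta v)$ yields $|\phi_v'(t)|\le\Phi_1(eL_0)(\|v'\|^2+\|\Delta v\|^2)\le e(L_0+L_1)\Phi_1(eL_0)=M_1$, so $\Psi_1(t;\phi_v)\le M_1$. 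Since $w(T_0)=\tilde u_0$, $w'(T_0)=\tilde u_1$ and $\phi_v(T_0)=\Phi(\|\nabla^+\tilde u_0\|^2)$, one computes $\tE_m(T_0;w,\phi_v)=L_m/2$, whence Proposition \ref{prop-L} gives $\tE_m(t;w,\phi_v)\le e^{M_1(t-T_0)}L_m/2\le eL_m/2$ on $[T_0,T_0+M_1^{-1}]$. Because $\phi_v\ge 1$, this is exactly the four bounds defining $X$ for $w$. The lifespan $M_1^{-1}$ is precisely the time for which the growth factor $e^{M_1(t-T_0)}$ stays at most $e$.

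Next I would estimate how $\Lambda$ depends on its argument. For $v_1,v_2\in X$ with $w_i=\Lambda(v_i)$ the difference $z=w_1-w_2$ solves $z''-\phi_{v_1}\Delta z=(\phi_{v_1}-\phi_{v_2})\Delta w_2$ with $z(T_0)=z'(T_0)=0$. Running the $\tE_0$-energy identity from the proof of Proposition \ref{prop-L} on this inhomogeneous equation gives, with $\cE_z:=\frac12(\phi_{v_1}\|\nabla^+ z\|^2+\|z'\|^2)$,
\[
\frac{d}{dt}\cE_z\le M_1\cE_z+\|z'\|\,|\phi_{v_1}-\phi_{v_2}|\,\|\Delta w_2\|.
\]
Using $|\phi_{v_1}(t)-\phi_{v_2}(t)|\le 2\sqrt{eL_0}\,\Phi_1(eL_0)\|\nabla^+(v_1-v_2)(t)\|$, $\|\Delta w_2\|\le\sqrt{eL_1}$, $\|z'\|\le\sqrt{2\cE_z}$ and the vanishing data, Gronwall yields a Duhamel bound $\sqrt{\cE_z(t)}\le B\int_{T_0}^t\|\nabla^+(v_1-v_2)(s)\|\,ds$ for a constant $B=B(e,L_0,L_1,\Phi_1(eL_0))$; together with $\|z(t)\|\le M_1^{-1}\sup\|z'\|$ (from $z(T_0)=0$) this controls the full $C^1$-difference of $w_1,w_2$ by $\sup_t\|\nabla^+(v_1-v_2)\|$.

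Finally I would run the iterates $v^{(k+1)}=\Lambda(v^{(k)})$ from some $v^{(0)}\in X$. Iterating the Volterra-type inequality gives $\sup_t\|\nabla^+(v^{(k+1)}-v^{(k)})\|\le \frac{(BM_1^{-1})^{k}}{k!}\,C_0$, which is summable, so $\{v^{(k)}\}$ is Cauchy in $C^1([T_0,T_0+M_1^{-1}];l^2)$; its limit $u$ lies in $X$, satisfies $u=\Lambda(u)$ by the Lipschitz continuity just shown, hence solves \eqref{KT0} and, being a $\Lambda$-image, is $C^2$. Uniqueness follows by the same energy estimate for the difference of two solutions, whose source is then controlled by the difference itself, giving $\sqrt{\cE_z(t)}\le B'\int_{T_0}^t\sqrt{\cE_z(s)}\,ds$ with zero data and hence $z\equiv 0$. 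I expect the main obstacle to be exactly this convergence step: the one-step Lipschitz constant of $\Lambda$ on $[T_0,T_0+M_1^{-1}]$ is of order $B/M_1^2$ and need not be below $1$, so the contraction cannot be obtained in a single shot and must instead be extracted from the Volterra (factorial) gain afforded by the vanishing initial data of $z$. A secondary subtlety is that the natural energies $\tE_m$ carry the iterate-dependent coefficient $\phi_v$, which is why $X$ should be phrased through the coefficient-free norms — legitimate since $\phi_v\ge 1$ — so that it is stable along the iteration.
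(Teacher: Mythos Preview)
Your proposal is correct and follows essentially the same strategy as the paper: a successive-approximation scheme built on the linear solver of Proposition~\ref{prop-L}, uniform energy bounds on the iterates yielding $\Psi_1\le M_1$ (this is exactly Lemma~\ref{lemm-ls1}), a difference-energy estimate for consecutive iterates producing the Volterra factorial gain $(Lt)^\nu/\nu!$, and uniqueness by the same energy inequality applied to the difference of two solutions. The only distinctions are cosmetic packaging---you phrase the iteration as a fixed-point map $\Lambda$ on an invariant set $X$ defined via coefficient-free norms, whereas the paper writes the iterates $w_\nu$ out directly and carries the weighted energies $\tE_m(\cdot;w_\nu,\phi_{\nu-1})$---and your derivative bound uses $\tfrac{d}{dt}\|\nabla^+ v\|^2=-2\Re(v',\Delta v)$ in place of the paper's $2\sum_j\Re(D_j^+ v',D_j^+ v)$, which are of course equal and lead to the identical constant $M_1$.
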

%%%%%%%%%%%%%%%%%%%%%%%%%%

From now on we suppose that $T_0=0$ without loss of generality. 
In order to prove Proposition \ref{Prop_ls}, we consider the following series of linear problems: 
\begin{equation}\label{L_nu}
\begin{cases}
  w_\nu''(t)[n]
  -\Phi\(\|\nabla^+ w_{\nu-1}(t)\|^2\) \Delta w_{\nu}(t)[n]=0,
   & (t,n) \in (0,\infty) \times \Z^d,\\[2mm]
  w_\nu(0)[n]=u_0[n],\quad w_\nu'(0)[n]=u_1[n], & n \in \Z^d
\end{cases} 
\end{equation}
for $\nu=1,2,\cdots$, and $w_0(t)=\tilde{u}_0$. 
%%%%%%%%%%%%%%%%%%%%%%%%%%%%%%%%%%%%%
\begin{lemma}\label{lemm-ls1}
The following estimates are established$:$
\begin{equation}\label{eq0_lemm-ls1}
  \tE_m(t;w_\nu,\phi_{\nu-1})
  \le \frac{eL_m}{2} \quad(m=0,1)
\end{equation}
for any $\nu \ge 1$ and $t \le M_1^{-1}$. 
It follows that 
\begin{equation}\label{eq1_lemm-ls1}
  \max_{0\le t \le T}
  \left\{\Phi\(\|\nabla^+ w_{\nu}(t)\|^2\)\right\} 
  \le \Phi_0\(e L_0\)
\end{equation}
and
\begin{equation}\label{eq2_lemm-ls1}
  \max_{0\le t \le T}
  \left\{\left| \frac{d}{dt}\Phi\(\|\nabla^+ w_{\nu}(t)\|^2\)\right|
  \right\}
  \le M_1.
\end{equation}
\end{lemma}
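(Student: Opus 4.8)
The plan is to establish \eqref{eq0_lemm-ls1} by induction on $\nu$, feeding each iterate into Proposition \ref{prop-L} with the coefficient $\phi_{\nu-1}(t):=\Phi(\|\nabla^+ w_{\nu-1}(t)\|^2)$. The first thing I would record is that every $w_\nu$ carries the \emph{same} initial data $u_0,u_1$, and that $\phi_{\nu-1}(0)=\Phi(\|\nabla^+ u_0\|^2)$ independently of $\nu$; hence the initial energies are fixed once and for all by the definitions of $L_0,L_1$:
\[
  \tE_0(0;w_\nu,\phi_{\nu-1}) = \frac{L_0}{2}, \qquad \tE_1(0;w_\nu,\phi_{\nu-1}) = \frac{L_1}{2}.
\]
By the estimate \eqref{eest-L} of Proposition \ref{prop-L} it then suffices to show that $\Psi_1(t;\phi_{\nu-1})\,t \le 1$ for $t \le M_1^{-1}$, since this forces $\exp(\Psi_1(t;\phi_{\nu-1})t)\le e$ and therefore $\tE_m(t;w_\nu,\phi_{\nu-1})\le eL_m/2$.

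For the base case $\nu=1$ I note that $w_0 \equiv \tilde u_0$ is constant, so $\phi_0$ is a constant function, $\phi_0'\equiv 0$, and $\Psi_1(t;\phi_0)=0$; the bound is then immediate. For the inductive step I assume $\tE_m(t;w_{\nu-1},\phi_{\nu-2})\le eL_m/2$ for $t\le M_1^{-1}$. Because the normalization $\Phi\ge1$ gives $\phi_{\nu-2}\ge1$, the definitions of the energies yield $\|\nabla^+ w_{\nu-1}(s)\|^2 \le 2\tE_0 \le eL_0$ and $\|\nabla^+ w_{\nu-1}'(s)\|^2 \le 2\tE_1 \le eL_1$. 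Differentiating the coefficient by the chain rule,
\[
  \phi_{\nu-1}'(s) = \Phi'(\|\nabla^+ w_{\nu-1}(s)\|^2)\,\frac{d}{ds}\|\nabla^+ w_{\nu-1}(s)\|^2,
\]
I bound the first factor by $\Phi_1(eL_0)$ and, writing $\frac{d}{ds}\|\nabla^+ w\|^2 = 2\sum_j \Re(D_j^+ w', D_j^+ w)$, apply Cauchy--Schwarz (in the inner product and in $j$) together with AM--GM:
\[
  \left|\frac{d}{ds}\|\nabla^+ w_{\nu-1}(s)\|^2\right| \le 2\,\|\nabla^+ w_{\nu-1}'(s)\|\,\|\nabla^+ w_{\nu-1}(s)\| \le 2e\sqrt{L_0 L_1} \le e(L_0+L_1).
\]
Hence $|\phi_{\nu-1}'(s)|\le e(L_0+L_1)\Phi_1(eL_0)=M_1$, so $\Psi_1(t;\phi_{\nu-1})\le M_1$ and $\Psi_1(t;\phi_{\nu-1})\,t\le 1$ on $[0,M_1^{-1}]$, which closes the induction.

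With \eqref{eq0_lemm-ls1} in hand, the remaining two estimates follow directly (taking $T=M_1^{-1}$). The bound $\tE_0(t;w_\nu,\phi_{\nu-1})\le eL_0/2$ gives $\|\nabla^+ w_\nu(t)\|^2\le eL_0$, and since $\Phi\ge1$ the definition of $\Phi_0$ yields $\Phi(\|\nabla^+ w_\nu(t)\|^2)\le\Phi_0(eL_0)$, which is \eqref{eq1_lemm-ls1}. For \eqref{eq2_lemm-ls1} I reuse the identical chain-rule computation, now applied to $w_\nu$ itself — whose $\tE_0$ and $\tE_1$ bounds have just been proved — to get $|\tfrac{d}{dt}\Phi(\|\nabla^+ w_\nu(t)\|^2)|\le e(L_0+L_1)\Phi_1(eL_0)=M_1$.

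The main obstacle is the self-consistency of the constant $M_1$: the induction only closes because the AM--GM step collapses the geometric mean $2\sqrt{L_0L_1}$, produced by the product $\|\nabla^+ w'\|\,\|\nabla^+ w\|$, into the arithmetic combination $L_0+L_1$ that appears in the very definition of $M_1$, and because the normalization $\Phi\ge1$ lets one discard the factor $\phi_{\nu-2}$ when extracting $\|\nabla^+ w_{\nu-1}\|^2$ from the energy. I would also verify that the level-$\nu$ estimate depends only on level $\nu-1$, which it does, since $\Psi_1(t;\phi_{\nu-1})$ involves only derivatives of $w_{\nu-1}$; this well-foundedness is what makes the inductive scheme legitimate.
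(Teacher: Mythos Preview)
Your proof is correct and follows essentially the same approach as the paper's: induction on $\nu$ using Proposition~\ref{prop-L}, with the base case handled by $\phi_0'\equiv 0$ and the inductive step by bounding $|\phi_{\nu-1}'|$ via the chain rule and the inductive energy bounds, exploiting $\Phi\ge 1$ to extract $\|\nabla^+ w_{\nu-1}\|^2$ and $\|\nabla^+ w_{\nu-1}'\|^2$ from $\tE_0$ and $\tE_1$. The only cosmetic difference is that the paper applies $2ab\le a^2+b^2$ termwise in $j$ before substituting the energy bounds, whereas you first pass to $2\|\nabla^+ w'\|\,\|\nabla^+ w\|$ and apply AM--GM afterward; both routes land on the same constant $e(L_0+L_1)\Phi_1(eL_0)=M_1$.
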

%%%%%%%%%%%%%%%%%%%%%%%%%%%%%%%%%%%%%
\begin{proof}
We denote 
\begin{equation*}
  \phi_\nu(t):=
  \begin{cases}
    \Phi\(\|\nabla^+ u_0\|^2\) & \text{\it for $\nu = 0$}, \\
    \Phi\(\|\nabla^+ w_{\nu}(t)\|^2\) & \text{\it for $\nu \ge 1$}.
  \end{cases}
\end{equation*}
If \eqref{eq0_lemm-ls1} holds, then we have 
\begin{equation*}
  \Phi\(\|\nabla^+ w_{\nu}(t)\|^2\)
  \le \Phi\(2\tE_0(t;w_\nu,\phi_{\nu-1})\)
  \le \Phi_0\(e L_0\)
\end{equation*}
and 
\begin{align*}
  \left|\frac{d}{dt}\Phi\(\|\nabla^+ w_\nu(t)\|^2\)\right|
&\le 2\sum_{j=1}^d \left| \Re\(D_j^+ w_\nu'(t),D_j^+ w_\nu(t)\) \right|
  \left|\Phi'\(\|\nabla^+ w_\nu(t)\|^2\)\right|
\\
&\le \( \|\nabla^+ w_\nu'(t)\|^2 + \|\nabla^+ w_\nu(t)\|^2 \)
  \Phi_1\(2\tE_0(t;w_\nu,\phi_{\nu-1})\)
\\
&\le \( 2\tE_1(t;w_\nu,\phi_{\nu-1}) + 2\tE_0(t;w_\nu,\phi_{\nu-1}) \)
  \Phi_1(eL_0)
\\
&\le e(L_1 + L_0) \Phi_1(eL_0), 
\end{align*}
that is,
\[
  \Psi_1\(t;\phi_\nu\) \le M_1. 
\]
Thus \eqref{eq1_lemm-ls1} and \eqref{eq2_lemm-ls1} hold. 
By applying Proposition \ref{prop-L} with 
$\phi(t)=\phi_0$ and noting that $\Psi_1(t;\phi_0) = 0$, 
we have 
\begin{equation}\label{est_tEm1}
  \tE_m(t;w_1,\phi_0)
  \le \tE_m(0;w_1,\phi_0)
  = \frac{L_m}{2} \quad(m=0,1)
\end{equation}
for any $t \ge 0$. 
Suppose that \eqref{eq0_lemm-ls1} is valid for $\nu = \mu$. 
Noting that 
$2\tE_m(0;w_{\mu+1},\phi_\nu)= L_m$, 
by Proposition \ref{prop-L} for \eqref{L_nu} with $\nu=\mu+1$ and \eqref{eq2_lemm-ls1}, we have 
\begin{align*}
 \tE_0(t;w_{\mu+1},\phi_\mu)
 \le \exp\(\Psi_1\(t;\phi_\mu\) t\)  \tE_0(0;w_{\mu+1},\phi_\mu) 
  \le \frac{\exp\(M_1 t\) L_0}{2}
  \le \frac{e L_0}{2}
\end{align*}
and
\begin{align*}
  \tE_1(t;w_{\mu+1},\phi_\mu)
  \le
  \frac{\exp\(M_1 t\) L_1}{2}
  \le \frac{e L_1}{2}
\end{align*}
for $t \le M_1^{-1}$. 
Therefore, \eqref{eq0_lemm-ls1} with $\nu=\mu+1$ is established for 
$t \le M_1^{-1}$, and thus \eqref{eq0_lemm-ls1} is valid for any $\nu \in \N$. 
\end{proof}

%%%%%%%%%%%%%%%%%%%%%%%%%%%%%%%%%%%%%%%%%%
\noindent
\textit{Proof of Proposition \ref{Prop_ls}}.\; 
For $\nu=1,2,\ldots$ we set 
\[
  y_\nu(t):=w_{\nu+1}(t) - w_\nu(t).  
\]
Then $y_\nu(t)$ is the solution of the following problem: 
\begin{equation}\label{y_nu}
\begin{cases}
  y_\nu''(t)-\phi_\nu(t) \Delta y_\nu(t)
  =\(\phi_\nu(t)-\phi_{\nu-1}(t)\) \Delta w_{\nu}(t),
   & (t,n) \in (0,T] \times \Z^d,\\[2mm]
  y_\nu(0)[n]=y_\nu'(0)[n]=0, & n \in \Z^d,
\end{cases} 
\end{equation}
where $T=M_1^{-1}$. 
For the solutions of \eqref{L_nu} and \eqref{y_nu}, we define  
\[
  H(t;y_\nu,\phi_\nu):=
  \frac12\( \phi_\nu(t)\|\nabla^+  y_\nu(t)\|^2 + \|y_\nu'(t)\|^2\).
\]
By mean value theorem, Lemma \ref{lemm-ls1} and \eqref{est_tEm1},
there exists $\ka\in \R$ satisfying $0<\ka<1$ such that 
\begin{align*}
  \left|\phi_1(t)-\phi_0(t)\right| 
  &\le 
  \left| \|\nabla^+ w_1(t)\|^2 - \|\nabla^+ w_0(t)\|^2 \right|
\\
  &\qquad
  \left|\Phi'\(\ka\|\nabla^+ w_1(t)\|^2+(1-\ka)\|\nabla^+ w_0(t)\|^2\)\right|
\\
  &\le L_0
  \left|\Phi'\(2\ka \tE_0(t;w_1,\phi_0) +2(1-\ka)\tE_0(0;w_0,\phi_0)\)\right|
\\
  &\le
  L_0 \Phi_1(L_0)
\end{align*}
for any $t \ge 0$, and 
\begin{align*}
  \left|\phi_\nu(t)-\phi_{\nu-1}(t)\right| 
  &\le 
  \left|\Phi'\(\ka\|\nabla^+ w_{\nu}(t)\|^2+(1-\ka)\|\nabla^+ w_{\nu-1}(t)\|^2\)\right| 
\\
  &\qquad
  \left| \|\nabla^+ w_{\nu}(t)\|^2 - \|\nabla^+ w_{\nu-1}(t)\|^2 \right|
\\
  &\le
  \Phi_1(eL_0)
  \(\|\nabla^+ w_{\nu}(t)\| + \|\nabla^+ w_{\nu-1}(t)\| \)
\\
  &\qquad
  \left| \|\nabla^+ w_{\nu}(t)\| - \|\nabla^+ w_{\nu-1}(t)\| \right|
\\
  &\le
  2 \Phi_1(eL_0) \sqrt{eL_0}
  \ \|\nabla^+ (w_{\nu}(t) - w_{\nu-1}(t))\|
\\
  &\le 
  2\Phi_1(eL_0) \sqrt{eL_0}
  \sqrt{2H(t;y_{\nu-1},\phi_{\nu-1})} 
\end{align*}
for $\nu = 2,3,\ldots$ and $t \le T$. 
Therefore, by \eqref{eq0_lemm-ls1}, \eqref{eq2_lemm-ls1} and \eqref{est_tEm1}, 
we have 
\begin{align*}
  \frac{d}{dt}H(t;y_1,\phi_1)
%&= \frac12\phi_1'(t) \|\nabla^+ y_1(t) \|^2
%  +  \Re\(y_1''(t) - \phi_1(t) \Delta y_1(t),y_1'(t)\)
%\\
&= \frac12\phi_1'(t) \|\nabla^+ y_1(t) \|^2
  + \(\phi_1(t) - \phi_0(t)\) \Re\( \Delta w_1(t),y_1'(t)\)
\\
&\le 
  M_1 H(t;y_1,\phi_1)
 + L_0 \Phi_1(L_0) \|\Delta w_1(t)\| \ \|y_1'(t)\|
\\
&\le 
  M_1 H(t;y_1,\phi_1)
 + L_0  \Phi_1(L_0) \sqrt{L_1} \sqrt{2H(t;y_1,\phi_1)}
\end{align*}
and
\begin{align*}
  \frac{d}{dt}H(t;y_\nu,\phi_\nu)
&\le 
  M_1 H(t;y_\nu,\phi_\nu)
\\
&\quad
 +2\Phi_1(eL_0) \sqrt{eL_0} \sqrt{2H(t;y_{\nu-1},\phi_{\nu-1})} 
  \|\Delta w_{\nu}(t)\| \ \|y_\nu'(t)\| 
\\
&\le 
  M_1 H(t;y_\nu,\phi_\nu)
\\
&\quad
 +2e \sqrt{L_0L_1} \Phi_1(eL_0) \sqrt{2H(t;y_{\nu-1},\phi_{\nu-1})} 
  \sqrt{2H(t;y_\nu,\phi_\nu)}. 
\end{align*}
Setting $y_0$ to satisfy $H(t;y_0,\phi_0) \equiv 1$, we have 
\begin{align*}
  \frac{d}{dt}\sqrt{H(t;y_\nu,\phi_\nu)}
\le 
  \frac{M_1}{2} \sqrt{H(t;y_\nu,\phi_\nu)}
 +L \sqrt{H(t;y_{\nu-1},\phi_{\nu-1})} 
\end{align*}
for any $\nu \ge 1$, where 
\[
  L=\frac12\max\left\{
    L_0  \sqrt{2L_1} \Phi_1(L_0), 
    4e\sqrt{L_0L_1} \Phi_1(eL_0) 
  \right\}. 
\]
By Gronwall's inequality, Stirling's formula and noting that $H(0;y_\nu,\phi_\nu)=0$ for any $\nu \ge 1$, we have 
\begin{align*}
\sqrt{H(t;y_\nu,\phi_\nu)} 
&\le L e^{\frac{M_1}{2} t} \int^t_0 e^{-\frac{M_1}{2} s_1}
  \sqrt{H(s_1;y_{\nu-1},\phi_{\nu-1})} \,ds_1
\\
&\le L^2 e^{\frac{M_1}{2} t} \int^t_0 \int^{s_1}_0 
  e^{-\frac{M_1}{2} s_2} \sqrt{H(s_2;y_{\nu-2},\phi_{\nu-2})} \,ds_2 \,ds_1
\\
&\;\; \vdots
\\
&\le L^{\nu} e^{\frac{M_1}{2} t} \int^t_0 \int^{s_1}_0  
  \cdots \int^{s_{\nu-1}}_0  e^{-\frac{M_1}{2} s_{\nu}} 
  \sqrt{H(s_{\nu};y_0,\phi_0)}\,ds_{\nu} \cdots ds_1
\\
&\le  L^{\nu} e^{\frac{M_1}{2} t} \int^t_0 \int^{s_1}_0  
  \cdots \int^{s_{\nu-1}}_0   \,ds_{\nu} \cdots ds_1
 =e^{\frac{M_1}{2} t} \frac{(Lt)^\nu}{\nu!}
\\
&\le e^{\frac{M_1}{2} T} \frac{(LT)^\nu}{\nu!}
 \le \frac{e^{\frac{M_1}{2}T}}{\sqrt{2\pi \nu}} \(\frac{eLT}{\nu}\)^\nu.
\end{align*}
Therefore, for $\nu,\mu \in \N$ satisfying $\nu > \mu$ 
and $\mu \ge \max\{2eLT,e^{M_1 T}/\pi\}$, we have 
\begin{align*}
\|w_\nu'(t)-w_\mu'(t)\| 
& \le
  \sum_{j=\mu}^{\nu-1} \|w_{j+1}'(t)-w_{j}'(t)\|
\\
&\le \sum_{j=\mu}^{\nu-1} \sqrt{2H(t;y_j,\phi_j)}
 \le \sum_{j=\mu}^{\nu-1} \(\frac12\)^j
\\
& \le \(\frac12\)^{\mu-1}
  \to 0 \;\;(\mu\to\infty), 
\end{align*}
it follows that 
$\{w_\nu'(t)\}_{\nu=1}^\infty$ is a Cauchy sequences in $l^2$ for any $t\in[0,T]$, 
and $\{w_\nu'(t)[n]\}_{\nu=1}^\infty$ 
is a uniformly convergent sequence on $[0,T]$ for any $n\in\Z^d$. 
Consequently, there exist $W(t) \in C^0([0,T];l^2)$ such that 
\begin{equation}\label{w_nu'-W}
  \lim_{\nu\to\infty} 
  \|w_\nu'(t)-W(t)\|
   =0. 
\end{equation}
Setting $u(t) \in C^1([0,T];l^2)$ as 
\[
  u(t)[n]:=\int^t_0 W(s)[n]\,ds + u_0[n]
  \;\;\text{ for }\;\;  n \in \Z^d,
\]
we have 
\begin{align*}
  |w_\nu(t)[n]-u(t)[n]|
&=\left|\int^t_0 w_\nu'(s)[n]\,ds - \int^t_0 W(s)[n]\,ds\right|
\\
&\le \int^t_0 \left| w_\nu'(s)[n] -  W(s)[n] \right|\,ds
\\
&\le T \sup_{s\in[0,T]} \left\{\left| w_\nu'(s)[n] -  W(s)[n] \right|\right\},
\end{align*}
and thus $\lim_{\nu\to\infty} \|w_\nu(t) - u(t)\| =0$ by \eqref{w_nu'-W}. 
Moreover, by Lemma \ref{lemma-l2H1}, we have 
\begin{equation*}
  \lim_{\nu\to\infty}\|\nabla^+ w_\nu(t) - \nabla^+ u(t)\|
 \le 2\sqrt{d}\lim_{\nu\to\infty}\|w_\nu(t) - u(t)\|
 =0
\end{equation*}
and
\begin{align*}
  \lim_{\nu\to\infty}\|\Delta w_\nu(t) - \Delta u(t)\|
 \le 4d\lim_{\nu\to\infty}\|w_\nu(t) - u(t)\|
 =0, 
\end{align*}
it follows that $u(t)$ is a solution of \eqref{K} on $[0,T]$. 
Noting that $\Phi\(\|\nabla^+ u(t)\|^2\) \in C^1([0,T];l^2)$, 
we have $u''(t) \in C^0([0,T];l^2)$, thus we conclude the proof of Proposition \ref{Prop_ls}. 
\qed

%%%%%%%%%%%%%%%%%%%%%%%%%%%%%%%%%%%%%%%%%%%%%%%%
%
\section{Global solvability}
%
%%%%%%%%%%%%%%%%%%%%%%%%%%%%%%%%%%%%%%%%%%%%%%%%

The energy conservation is a characteristic property for \eqref{KC}, 
and semi-discrete problem \eqref{K} also has the corresponding property as far as the solution exists. 
%
%%%%%%%%%%%%%%%%%%%%%%%%%
\begin{lemma}[Energy conservation]\label{lemm-EC}
If $T>0$ and the solution $u(t) \in C^2([0,T];l^2)$ of \eqref{K} exists,
then the energy conservation \eqref{E-conserv} is established for any $t\in [0,T]$. 
\end{lemma}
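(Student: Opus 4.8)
The plan is to show that $t\mapsto E(t;u)$ is differentiable on $[0,T]$ with vanishing derivative, so that it is constant. First I would record the regularity needed: since $u\in C^2([0,T];l^2)$ and the operators $D_j^\pm$ are bounded on $l^2$ by Lemma \ref{lemma-l2H1}, the scalar function $\psi(t):=\|\nabla^+ u(t)\|^2=\sum_{j=1}^d\|D_j^+ u(t)\|^2$ is $C^1$, and by bilinearity of the inner product
\[
  \psi'(t)=2\sum_{j=1}^d \Re\(D_j^+ u'(t),D_j^+ u(t)\).
\]
Applying the adjoint relation $(D_j^+ f,g)=-(f,D_j^- g)$ of Lemma \ref{lemma-l2H1} together with $\sum_{j=1}^d D_j^- D_j^+=\Delta$, exactly as in the proof of Proposition \ref{prop-L}, this reduces to $\psi'(t)=-2\Re\(u'(t),\Delta u(t)\)$.

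Next I would differentiate the two terms of $E$ separately. For the potential term, since $\Phi\in C^1([0,\infty))$ the integrand is continuous, so the fundamental theorem of calculus and the chain rule give
\[
  \frac{d}{dt}\int_0^{\psi(t)}\Phi(\eta)\,d\eta
  =\Phi(\psi(t))\,\psi'(t)
  =-2\Phi\(\|\nabla^+ u(t)\|^2\)\Re\(u'(t),\Delta u(t)\).
\]
For the kinetic term, $\frac{d}{dt}\|u'(t)\|^2=2\Re\(u''(t),u'(t)\)$, and substituting $u''(t)=\Phi\(\|\nabla^+ u(t)\|^2\)\Delta u(t)$ from \eqref{K}, together with the fact that $\Phi$ takes real values and may therefore be pulled outside the real part, yields $2\Phi\(\|\nabla^+ u(t)\|^2\)\Re\(\Delta u(t),u'(t)\)$.

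Adding the two contributions and using the conjugate symmetry $\Re\(u'(t),\Delta u(t)\)=\Re\(\Delta u(t),u'(t)\)$, I find $\frac{d}{dt}E(t;u)=0$ on $[0,T]$, whence \eqref{E-conserv}. I expect the only point requiring care to be the justification of differentiating the integral $\int_0^{\psi(t)}\Phi(\eta)\,d\eta$ through the moving upper limit; this is clean because $\Phi$ is continuous and $\psi$ is $C^1$, so the composite is $C^1$. All remaining manipulations are elementary consequences of the continuity and bilinearity of the $l^2$ inner product applied to the continuous curves $u,u',u''$, so no genuine analytic obstacle arises.
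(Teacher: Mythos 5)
Your proposal is correct and follows essentially the same route as the paper: differentiate $E(t;u)$, use the fundamental theorem of calculus for the potential term, convert $\sum_{j=1}^d \Re\(D_j^+ u'(t),D_j^+ u(t)\)$ into $-\Re\(u'(t),\Delta u(t)\)$ via the adjoint relation of Lemma \ref{lemma-l2H1}, and substitute the equation \eqref{K} to obtain cancellation. The only cosmetic difference is that you make explicit (the chain-rule justification and the final substitution of $u''=\Phi\(\|\nabla^+ u(t)\|^2\)\Delta u(t)$) what the paper compresses into a single displayed computation.
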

%%%%%%%%%%%%%%%%%%%%%%%%%
\begin{proof}
Since $D_j^\pm D_j^\mp u(t), D_j^+ u'(t) \in l^2$ for $j=1,\ldots,d$ by Lemma \ref{lemma-l2H1}, we have 
\begin{align*}
  \frac{d}{dt}E(t;u)
&=\Phi\(\|\nabla^+ u(t)\|^2\) \sum_{j=1}^d \Re\(D_j^+ u'(t),D_j^+ u(t)\) 
 +\Re\(u'(t),u''(t)\)
\\
&=-\Phi\(\|\nabla^+ u(t)\|^2\)\sum_{j=1}^d \Re\(u'(t),D_j^- D_j^+ u(t)\)
 +\Re\(u'(t),u''(t)\)
\\
&=-\Phi\(\|\nabla^+ u(t)\|^2\)\Re\(u'(t), \Delta u(t)\)
 +\Re\(u'(t),u''(t)\)
\\
&=0.
\end{align*}
It follows that  $E(t;u) = E(0;u)$ for any $t\in[0,T]$. 
\end{proof}

\noindent
\textit{Proof of Theorem \ref{Mth}.}\;
Let $u_0,u_1 \in l^2$ and $u(t)$ be the solution of \eqref{K} on $[0,M_1^{-1}]$, where $M_1$ is given in Proposition \ref{Prop_ls}. 
We note that the following estimates hold:
\begin{equation*}\label{lemma_Em-e1}
  \|\nabla^+ u(t)\|^2+\|u'(t)\|^2
  \le 2E(t;u) = 2E(0;u),
\end{equation*}
\begin{equation}\label{L0E}
  L_0(u(t),u'(t))
% &\le \Phi\(\|\nabla^+ u(t)\|\)\(\|\nabla^+ u(t)\|^2+\|u'(t)\|^2\)
 \le 2\Phi_0\(2E(t;u)\)E(t;u) 
 = 2\Phi_0\(2E(0;u)\)E(0;u)
 =:E_0
\end{equation}
and
\begin{equation}\label{L1L0}
  L_1(u(t),u'(t)) \le 4d L_0(u(t),u'(t)) 
\end{equation}
for any $t \in [0,M_1^{-1}]$ by Lemma \ref{lemma-l2H1} and Lemma \ref{lemm-EC}. Setting 
\[
  \de_1:=\frac{1}{e(1+4d)E_0 \Phi_1(e E_0)} 
\]
and applying Proposition \ref{Prop_ls} with 
\[
  T_0=\de_1  
  \le \frac{1}{e(1+4d)L_0(u_0,u_1) \Phi_1(e L_0(u_0,u_1))}
  \le M_1^{-1},
\]
$\tilde{u}_0 = u(\de_1)$ and $\tilde{u}_1 = u'(\de_1)$, 
the existence time of the solution is extended 
from $[0, M_1^{-1}]$ to 
$[0,\de_1 + M_2^{-1}]$, 
where 
\[
  M_2:= e(L_0(u(\de_1),u'(\de_1))+L_1(u(\de_1),u'(\de_1)))
    \Phi_1(eL_0(u(\de_1),u'(\de_1))). 
\]
Moreover, by \eqref{L0E} and \eqref{L1L0}, we have 
\begin{align*}
  M_2 
 &\le e(1+4d) L_0(u(\de_1),u'(\de_1))\Phi_1(eL_0(u(\de_1),u'(\de_1)))\\
 &\le e(1+4d) E_0 \Phi_1(eE_0) = \de_1^{-1}. 
\end{align*}
Thus the existence time of the solution is ensured at least on $[0, 2\de_1]$. 
Analogously, by applying Proposition \ref{Prop_ls} with 
$T_0 = n\de_1$, $\tilde{u}_0=u(n\de_1)$ and $\tilde{u}_1=u'(n\de_1)$, 
the existence time of the solution of \eqref{K} can be extend on 
$[0,(n+1)\de_1]$, 
and also the energy conservation is established on $[0,(n+1)\de_1]$ 
for $n=2,3,\ldots$. 
Thus the proof of Theorem \ref{Mth} is concluded. 
%
%Moreover, the solution satisfies 
%$u(t)\in C^2([0,T+n\tilde{T}];l^2)$ and the energy conservation \ref{E-conserv}% is established on $[0,T+n\tilde{T}]$ by Lemma \ref{lemm-EC}. 
%Thus the proof of Theorem \ref{Mth} is concluded. 
\qed

%%%%%%%%%%%%%%%%%%%%%%%%%%%%%%%%%%%%%%%%%%%%%%%%
%
\section{Concluding remarks}
%
%%%%%%%%%%%%%%%%%%%%%%%%%%%%%%%%%%%%%%%%%%%%%%%%

Although this paper considers the most basic semi-discrete model of Kirchhoff equations, the following problems should be studied as the next step. 

\begin{itemize}
\item 
The equation \eqref{K} is a discretized model on a fixed lattice $\Z^d$, thus the result of the global solvability is in some sense isolated from the results of continuous models. 
A unified understanding of continuous and semi-discrete models should be an interesting and important problem, and considering some asymptotic analysis with the interval of the lattice approaches zero is considered to be one method. 

\item 
According to \cite{GG10, H02}, the assumption \eqref{Phi} is crucial for the continuous model \eqref{KC} even for the local solvability, and it comes from the linear problem in \cite{CDS79}. 
On the other hand, $\phi \in C^1([0,\infty))$ for linear problem \eqref{L} is not necessarily required in \cite{H21} for semi-discrete model, thus $\Phi \in C^1([0,\infty))$ in \eqref{Phi} may also not be necessary. 
Furthermore, it is an interesting question whether the positivity of $\Phi$ and $\phi$ are required both non-linear and linear problems. 
\end{itemize}

\end{document}